\providecommand{\U}[1]{\protect\rule{.1in}{.1in}}
\newtheorem{theorem}{Theorem}
\newtheorem{corollary}[theorem]{Corollary}
\newtheorem{definition}[theorem]{Definition}
\newtheorem{proposition}[theorem]{Proposition}
\newtheorem{remark}[theorem]{Remark}
\newenvironment{proof}[1][Proof]{\noindent\textbf{#1.} }{\ \rule{0.5em}{0.5em}}
\title{Some inclusion results for interpolated summing operator ideals and integrability improvement of vector valued functions}
\author{{\bf D. Pellegrino} \\ {\small Departamento de Matem\'atica}\\  {\small Universidade
Federal da Para\'iba} \\  {\small 58.051-900, Jo\~ao Pessoa, Brazil} \\  {\small e-mail: dmpellegrino@gmail.com}\\ {\bf P. Rueda}\\   {\small  Departamento de An\'alisis Matem\'atico} \\
 {\small  Universidad de Valencia}\\ {\small 46100 Burjassot - Valencia, Spain}\\   {\small e-mail: pilar.rueda@uv.es}\\  {\bf E.A. S\'anchez-P\'erez} \\   {\small Instituto Universitario de
Matem\'{a}tica Pura y Aplicada}\\   {\small Universitat Polit\`ecnica de Val\`encia} \\  {\small Camino
de Vera s/n, 46022 Valencia, Spain} \\ {\small  e-mail: easancpe@mat.upv.es}}
\begin{document}

\maketitle

\begin{abstract}
Consider a Banach space valued measurable function $f$ and an operator $u$ from the space where {$f$} takes values. If $f $ is Pettis  integrable, a classical result due to
 J. Diestel shows that composing it  with $u$ gives a Bochner integrable function $u \circ f$ whenever $u$ is absolutely summing. In a previous work we have shown that a well-known
interpolation technique for  operator ideals allows to prove under some requirements that a composition of a $p$-Pettis integrable function with a $q$-summing operator provides an $r$-Bochner
 integrable function. In this paper a new abstract inclusion theorem for classes of {abstract} summing operators is shown and  applied to the class of interpolated operator ideals. Together with the results of the {aforementioned} paper, it
provides more results on the relation about the integrability of the function $u \circ f$ and the summability properties of $u$.
\end{abstract}

\thanks{AMS subject classification(2010): Primary 46E40, Secondary 47B10 \\ Keywords: absolutely summing operator, inclusion, absolutely continuous operator, Pettis integrable function, Bochner integrable function.}

\section{Introduction}
The fact that absolutely summing operators improve the summability of sequences lies in the core of the theory. But the classical Diestel theorem shows that absolutely summing operators improve not only the summability of sequences, but also the integrability of vector valued functions: 
an operator $u:X \to Y$ is absolutely summing
 if and only if it transforms each Pettis integrable function to a Bochner integrable function when composed with $u$ (see \cite{Di}).

The technique for proving this theorem cannot be {transfered} directly to { general $q$-summing operators in order to give }  characterizations of operators transforming $p$-Pettis integrable functions to $r$-Bochner integrable functions in terms of their summability properties.
Recently, we have published a paper in which a different procedure is used {to get} such results (see \cite{perusan}). In {that paper}, an interpolation procedure for operator ideals (see \cite{Matt87,LS93,LS00}) is used as well as  new vector valued function spaces constructed by
 ``interpolating"  the $p$-Pettis and the $p$-Bochner integrable functions.

 In the present work we {use  the main result of \cite{perusan} jointly with } a new inclusion theorem for interpolated operator ideals {to find new insights of the relation between summing operators and the improvement of integrability of a vector valued function. It is worth mentioning that the inclusion theorem has been modeled in an abstract setting whose roots can be found in the abstract domination theorem proven in \cite{BoPeRu}}

\section{Basic definitions}

We refer to \cite{DF92,DJT95,Pie80} for definitions and general results on operator ideals and $p$-summing operators, and to \cite{LS00,Matt87} for the class of $(p,\sigma)$-absolutely continuous operators. {This class} {forms} {an operator ideal that is constructed by means of } an interpolation procedure.
Let $1 \le p < \infty$ and $0 \le \sigma < 1$. A (linear and continuous)  operator $u:X \to Y$ between Banach spaces is  {\it $(p, \sigma)$-absolutely continuous} if {there is a constant $C>0$ such that} for every $x_1,...,x_n \in X$,
$$
\Big( \sum_{i=1}^n \big\| u(x_i) \big\|^{\frac{p}{1-\sigma}} \Big)^{\frac{1-\sigma}{p}} \le C \sup_{x' \in B_{X^*}} \Big( \sum_{i=1}^n \big( | \langle x_i, x' \rangle|^{1-\sigma} \| x_i \big\|^\sigma \big)^{\frac{p}{1-\sigma}} \Big)^{\frac{1-\sigma}{p}}.
$$
The space of all linear  operators $u$ from $X$ to $Y$ satisfying these inequalities is denoted  by $\Pi_p^\sigma(X,Y)$. It is a normed space, being $\pi_p^{\sigma}(u)$
 the norm computed as the infimum of all the constants $C$  ( see \cite{LS93,LS00,Matt87} for further details).

The following class of vector valued function spaces is relevant in this paper.
Let $0 \le \sigma \le 1$ and $1 \le p < \infty$. Let $\mu$ be a finite measure. Consider the space ${\mathcal S}_{p}^\sigma(\mu,X)$ of all equivalence classes with respect to $\mu$ of simple  functions with values in the Banach space $X$.
Obviously, all such functions $f$ satisfy
$$
 \big( | \langle f(\cdot), x' \rangle |^{1-\sigma} \|f(\cdot)\|^\sigma \big)^p \in L^1(\mu)
$$
for all ${x' \in X^*},$ and
$$
\Phi_{p,\sigma}(f):= \sup_{x' \in B_{X^*}}  \Big( \int \big( | \langle f(w), x' \rangle |^{1-\sigma} \|f(w)\|^\sigma \big)^p \, d \mu  \Big)^{1/p} < \infty.
$$

A seminorm for this space can be given by the convexification $\|\cdot \|_{p,\sigma}$ of the homogeneous function $\Phi_{p,\sigma}$,
$$
\|f\|_{p,\sigma}:= \inf \big\{ \sum_{i=1}^n \Phi_{p,\sigma}(f_i): f=\sum_{i=1}^n f_i \big\},
\quad f\in {\mathcal S}_p^\sigma(\mu,X).
$$
We will write $\mathcal P_p^\sigma(\mu,X)$ for the function space associated to the
 completion  {$\overline{S_p^\sigma(\mu,X)}$ of ${\mathcal S}_{p}^\sigma(\mu,X)$, that is,
 ${\mathcal P}_p^\sigma(\mu,X)$  is the subspace of the elements of $\overline{S_p^\sigma(\mu,X)}$  that can be represented by a function in ${\mathcal P}_{p}(\mu,X)$. }
The reader can find all the required information about these spaces,
composition operators and $(p,\sigma)$-absolutely continuous operators in  \cite{perusan}. For the case $\sigma=0$, we get the classical space
$\mathcal P_p(\mu,X)$ of Pettis integrable functions that are strongly measurable.

\section{An abstract inclusion theorem}

\bigskip Let $Y$ be an arbitrary set, $X$ a vector space and let $K$ be a compact topological space.
Consider a measure space $\left(  \Omega
,\Sigma,\nu\right)  $. Let $\mathcal{H}$ be a set of functions from $\Omega$ into $X$ such that $\alpha\mathcal{H}=\mathcal{H}$ for all
${\alpha \in \mathbb R}$.

Let $\mathcal{F}$ be a set of maps from $X$ to $Y$. Consider two functions
\begin{align*}
S  & :\mathcal{F}\times\mathcal{H}\rightarrow\mathcal{L}_{q}\left(  \nu\right)
\\
R  & :\mathcal{H\times}W\rightarrow\mathcal{L}_{p}\left(  \nu\right)
\end{align*}
such that%
$$
\left\vert \alpha S\left(  u,f\right)  \right\vert   =\left\vert S\left(
u,\alpha f\right)  \right\vert
$$
and
$$
\left\vert \alpha R\left(  f,k\right)  \right\vert  =\left\vert R\left(
\alpha f,k\right)  \right\vert
$$
for all $\alpha \in \mathbb R$, $u \in \mathcal F$, {$k\in K$ and $f \in \mathcal H$. Note that if $g\in \mathcal{L}_{q}\left(  \nu\right)$ and $f\in \mathcal{H}$ then $g(w)f\in \mathcal{H}$ for all $w\in \Omega$.} In what follows we will write explicitly the variable $w$ when we want to emphasize that we are referring to the (scalar) value of $g$ at the point $w$.

\bigskip


\begin{definition}\rm
{A map $u:X\rightarrow Y$ is {\it $(  q,p)$-$RS$ summing} if there is a constant $C>0$ such that
\[
\left(
{\displaystyle\int\limits_{\Omega}}
 \big| S\left(  u,g(w)f\right)(w) \big|  ^{q}  \, d\nu\right)^{\frac{1}{q}}\leq C\sup_{k\in K} \left(
{\displaystyle\int\limits_{\Omega}}
 \big|  R ( g(w) f,k)(w)   \big| ^{p} \, d\nu \right)^{\frac{1}{p}}
\]
for all $f:\Omega\rightarrow X$ in $\mathcal{H}$ and all $g\in \mathcal L_q(\nu)$. Let $RS\left(   q ,p\right)$ denote the class of all $(  q,p)$-$RS$ summing mappings.}
\end{definition}

\bigskip

{The next result} gives our inclusion result for  $(q,p)$-$RS$ summing operators. Typically, the measure $\nu$ is atomic, $\sigma$-finite but not finite. This will provide in the next section the main tool
for {inclusions} among classes of $(q,p,\sigma)$-absolutely continuous operators.

\begin{theorem} \label{inclu}
Suppose that the measure $\nu$ satisfies that for  $1 \le r<s < \infty$ there are constants $C_{s,r}$ such that  $\left\Vert {\cdot}\right\Vert _{L_{s}(\nu)}\leq C_{s,r} \, \left\Vert \cdot
{}\right\Vert _{L_{r}(\nu)}.$ Suppose that $1\leq p_{1}\leq p_{2}<\infty$, and
$1\leq q_{1}\leq q_{2}<\infty$ and%
\[
\frac{1}{p_{1}}-\frac{1}{p_{2}}\leq\frac{1}{q_{1}}-\frac{1}{q_{2}}.
\]
Then%
\[
RS\left(   q_{1} ,p_{1}\right)  \subset RS\left(
q_{2},p_{2}\right).
\]

\end{theorem}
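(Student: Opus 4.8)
The plan is to first strip away the diagonal evaluation in the definition using the homogeneity of $S$ and $R$, and then prove the inclusion by applying the $(q_1,p_1)$-$RS$ inequality to a cleverly reweighted multiplier. First I would fix $u\in RS(q_1,p_1)$, a function $f\in\mathcal H$ and a multiplier $g\in\mathcal L_{q_2}(\nu)$, and reduce the statement to a scalar weighted inequality. Writing $\phi:=|S(u,f)|$ and $\psi_k:=|R(f,k)|$, the homogeneity relations $|\alpha S(u,f)|=|S(u,\alpha f)|$ and $|\alpha R(f,k)|=|R(\alpha f,k)|$ give, for a.e.\ $w$, the identities $|S(u,g(w)f)(w)|=|g(w)|\,\phi(w)$ and $|R(g(w)f,k)(w)|=|g(w)|\,\psi_k(w)$. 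Hence membership in $RS(q,p)$ is equivalent to the family of inequalities
\[
\Big(\int_\Omega |g|^{q}\,\phi^{q}\,d\nu\Big)^{1/q}\le C\,\sup_{k\in K}\Big(\int_\Omega |g|^{p}\,\psi_k^{p}\,d\nu\Big)^{1/p}
\]
holding for every admissible multiplier, so the whole problem becomes the real-variable statement that the $(q_1,p_1)$-version of this inequality (for all $g$) implies the $(q_2,p_2)$-version.

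The heart of the argument is the choice of weight. Put $a=\tfrac1{q_1}-\tfrac1{q_2}$, $b=\tfrac1{p_1}-\tfrac1{p_2}$ and $c=\tfrac{q_2}{q_1}-1=q_2 a$. Assuming $b>0$, I would apply the reduced $(q_1,p_1)$-inequality not to $g$ but to the multiplier $g\,(g\phi)^{c}$. On the left the integrand becomes $|g|^{(1+c)q_1}\phi^{(1+c)q_1}=(|g|\phi)^{q_2}$, so the left-hand side is exactly $\big\||g|\phi\big\|_{q_2}^{\,q_2/q_1}$. On the right I would apply H\"older's inequality with exponents $p_2/p_1$ and its conjugate to the factorization $(|g|\psi_k)^{p_1}\cdot(|g|\phi)^{cp_1}$; since $p_1\cdot\tfrac{p_2}{p_2-p_1}=1/b$, this peels off the $k$-independent weight factor $\big(\int(|g|\phi)^{c/b}\,d\nu\big)^{b}=\big\||g|\phi\big\|_{c/b}^{\,c}$ and leaves $\sup_k\big\||g|\psi_k\big\|_{p_2}$. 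The outcome is
\[
\big\||g|\phi\big\|_{q_2}^{\,q_2/q_1}\le C_1\,\big\||g|\phi\big\|_{c/b}^{\,c}\,\sup_{k\in K}\big\||g|\psi_k\big\|_{p_2}.
\]

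Finally I would invoke the hypothesis on $\nu$, and it is exactly here that the index condition enters. Because $a\ge b$ is the assumption $\tfrac1{p_1}-\tfrac1{p_2}\le\tfrac1{q_1}-\tfrac1{q_2}$, one has $c/b=q_2\,(a/b)\ge q_2$, so the assumed inequality $\|\cdot\|_{L_{s}(\nu)}\le C_{s,r}\|\cdot\|_{L_{r}(\nu)}$ applied with $s=c/b$ and $r=q_2$ yields $\big\||g|\phi\big\|_{c/b}\le C_{c/b,q_2}\,\big\||g|\phi\big\|_{q_2}$. Substituting this and cancelling the common factor $\big\||g|\phi\big\|_{q_2}^{\,c}$ (note $q_2/q_1-c=1$) gives $\big\||g|\phi\big\|_{q_2}\le C_1 C_{c/b,q_2}^{\,c}\sup_k\big\||g|\psi_k\big\|_{p_2}$, which is precisely membership in $RS(q_2,p_2)$. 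The leftover case $b=0$, which forces $p_1=p_2$, is immediate: the $\nu$-hypothesis gives $\big\||g|\phi\big\|_{q_2}\le C_{q_2,q_1}\big\||g|\phi\big\|_{q_1}$ and the $(q_1,p_1)$-inequality finishes it directly.

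I expect the main obstacle to be not the algebra but the analytic justification of the two manipulations involving $\big\||g|\phi\big\|_{q_2}$: one must ensure that the auxiliary multiplier $g\,(g\phi)^{c}$ actually belongs to $\mathcal L_{q_1}(\nu)$, so that the $(q_1,p_1)$-inequality may be applied to it, and that $\big\||g|\phi\big\|_{q_2}$ is finite and nonzero before the cancellation. I would dispose of this by a truncation argument, replacing $\phi$ by $\min(\phi,n)$ restricted to sets of finite $\nu$-measure, where every integral in sight is finite and the reduced inequalities hold verbatim, and then passing to the limit by monotone convergence, since both sides are monotone in $\phi$.
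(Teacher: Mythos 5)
Your proof is correct and follows essentially the same route as the paper's: your auxiliary multiplier $g\,(g\phi)^{c}$ with $c=\tfrac{q_2}{q_1}-1$ is exactly the paper's weighted multiplier $\lambda g$ with $\lambda=S(u,gf)^{q_2/q}$, $\tfrac1q=\tfrac1{q_1}-\tfrac1{q_2}$, and your subsequent H\"older step with exponents $p_2/p_1$ and $p_2/(p_2-p_1)$, the embedding $\|\cdot\|_{L_{c/b}}\le C\|\cdot\|_{L_{q_2}}$, and the final cancellation reproduce the paper's computation step for step. The only differences are cosmetic refinements on your side (factoring out $|g|$ via homogeneity at the outset, the separate treatment of the degenerate case $p_1=p_2$, and the truncation argument justifying the cancellation, which the paper handles more briefly by noting $\lambda g\in\mathcal L_{q_1}$).
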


\begin{proof} 
{Take $u \in RS\left(q_{1} ,p_{1}\right)$. }
Let $\frac{1}{p}  =\frac{1}{p_{1}}-\frac{1}{p_{2}}$ and $\frac{1}{q}  =\frac{1}{q_{1}}-\frac{1}{q_{2}},$
and {$f\in \mathcal H$ and $g\in \mathcal L_{q_2}(\nu)$.} Define
\[
\lambda(w)=S\left(  u,g(w) f\right)(w)^{\frac{q_{2}}{q}}.
\] 
As $q_1\leq q_2$ it follows that $S(u,f)\in \mathcal L_{q_2}$ and since $\frac{1}{q_2}+\frac{1}{q}=\frac{1}{q_1}$ then $\lambda g\in \mathcal L_{q_1}$. 
Then for $\beta= q_1/p_1$ we obtain
\begin{align*}%
{\displaystyle\int\limits_{\Omega}}
\left\vert S\left(  u,g(w)f\right)(w)  \right\vert ^{q_{2}}d\nu & =%
{\displaystyle\int\limits_{\Omega}}
\left\vert S\left(  u,g(w)f\right)(w)  \right\vert ^{q_{1}}\left\vert \lambda(w)
\right\vert ^{q_{1}}d\nu\\
& =
{\displaystyle\int\limits_{\Omega}}
\left\vert S\left(  u,\lambda(w) g(w)f\right)(w)  \right\vert ^{q_{1}}d\nu\\
& \leq C\left(  \sup_{k\in K}%
{\displaystyle\int\limits_{\Omega}}
\left\vert R\left(  \lambda(w) g(w) f,k\right)(w)  \right\vert ^{p_{1}}d\nu\right)
^{\beta} \\
& =C\left(  \sup_{k\in K}%
{\displaystyle\int\limits_{\Omega}}
\left\vert S\left(  u,g(w)f\right)(w) \right\vert ^{p_{1}\frac{q_{2}}{q}} \left\vert
R\left( g(w) f,k\right)(w)  \right\vert ^{p_{1}}d\nu\right)  ^{\beta}.
\end{align*}
Since
\[
\frac{1}{\left(  \frac{p}{p_{1}}\right)  }+\frac{1}{\left(  \frac{p_{2}}%
{p_{1}}\right)  }=1,
\]
using  H\"older's inequality we obtain%
\begin{align*}
{\displaystyle\int\limits_{\Omega}}
\left\vert S\left(  u,g(w)f\right)(w)  \right\vert ^{q_{2}}d\nu
\end{align*}
\begin{align*}
\leq C\left(
\sup_{k\in K}\left(
{\displaystyle\int\limits_{\Omega}}
\left(  \left\vert S\left(  u,g(w)f\right)(w)  ^{p_{1}\frac{q_{2}}{q}}\right\vert
\right)^{\frac{p}{p_{1}}}d\nu\right)  ^{\frac{p_{1}}{p}}
\cdot \,\,
\left(
{\displaystyle\int\limits_{\Omega}}
\left(  \left\vert R\left( g(w) f,k\right)(w)  \right\vert ^{p_{1}}\right)
^{\frac{p_{2}}{p_{1}}}d\nu\right) ^{\frac{p_{1}}{p_{2}}}\right)  ^{\beta}
\end{align*}
$$
 =C\left(
{\displaystyle\int\limits_{\Omega}}
\left\vert S\left(  u,g(w)f\right)(w)  ^{\frac{q_{2}p}{q}}\right\vert d\nu\right)
^{\frac{p_{1}}{p}\beta}  \cdot \,\, \sup_{k\in K}\left(
{\displaystyle\int\limits_{\Omega}}
\left\vert R\left(  g(w)f,k\right)(w)  \right\vert ^{p_{2}}d\nu\right)  ^{\frac
{p_{1}}{p_{2}}\beta}.
$$
But since $p\geq q$ we have%
\[
{\displaystyle\int\limits_{\Omega}}
\left\vert S\left(  u,g(w)f\right)(w)  \right\vert ^{q_{2}}d\nu 
\]
\[
\leq {C (C_{p,q})^{p_1 \beta}} \left(
{\displaystyle\int\limits_{\Omega}}
\left\vert S\left(  u,g(w)f\right)(w)  \right\vert ^{q_{2}}d\nu\right)  ^{\frac
{p_{1}}{q}\beta}\sup_{k\in K}\left(
{\displaystyle\int\limits_{\Omega}}
\left\vert R\left(  g(w)f,k\right)(w)  \right\vert ^{p_{2}}d\nu\right)  ^{\frac
{p_{1}}{p_{2}}\beta}
\]
and thus%
\[
\left(
{\displaystyle\int\limits_{\Omega}}
\left\vert S\left(  u,g(w)f\right)(w)  \right\vert ^{q_{2}}d\nu\right)
^{1-\frac{p_{1}}{q}\beta}
\leq
{C(C_{p,q})^{p_1 \beta}} \,
\sup_{k\in K}\left(
{\displaystyle\int\limits_{\Omega}}
\left\vert R\left(  g(w)f,k\right)(w)  \right\vert ^{p_{2}}d\nu\right)  ^{\frac
{p_{1}}{p_{2}}\beta},
\]
i.e.,%
\[
\left(
{\displaystyle\int\limits_{\Omega}}
\left\vert S\left(  u,g(w)f\right)(w)  \right\vert ^{q_{2}}d\nu\right)
^{\frac{1}{q_2}}
\leq
{C^{1/q_1} C_{p,q} }\,
\sup_{k\in K}\left(
{\displaystyle\int\limits_{\Omega}}
\left\vert R\left(  g(w)f,w\right)(w)  \right\vert ^{p_{2}}d\nu\right)^{\frac{1}{p_2}}.
\]

\end{proof}


\section{ The inclusion theorem for $(p,q,\sigma)$-absolutely continuous operators}

As in the case of $p$-summing operators, a notion of $(p,\sigma)$-absolutely continuous operator with different indexes in the left and right hand sides of the inequality can be made,
and {it defines}  new classes of operators with particular summability properties. Let $1 \le p \le q < \infty$ and $0 \le \sigma < 1$ { and let $X$ and $Y$ be Banach spaces.}

\begin{definition}\rm
{A linear } operator $u:X \to Y$ is {\it $(q,p,\sigma)$-absolutely continuous} if there is a constant $C>0$ such that for every $x_1, ..., x_n \in X$,
\begin{equation}\label{pq}
\Big( \sum_{i=1}^n \big\| u(x_i) \big\|^{\frac{q}{1-\sigma}} \Big)^{\frac{1-\sigma}{q}} \le C \sup_{x' \in B_{X^*}} \Big( \sum_{i=1}^n \big( | \langle x_i, x' \rangle|^{1-\sigma} \| x_i \big\|^\sigma \big)^{\frac{p}{1-\sigma}}
 \Big)^{\frac{1-\sigma}{p}}.
\end{equation}
We will write $\Pi_{q,p}^\sigma(X,Y)$ for this class.
\end{definition}

{Let us see that $(q,p,\sigma)$-absolutely continuous operators are a particular class of $(q',p')$-$RS$ summing mappings.}
Consider $(\Omega, \Sigma, \nu)$ to be the natural numbers with the counting measure $(\mathbb N, \mathcal P(\mathbb N), c)$.  Consider $B_{X^*}$ with the weak* topology as {the compact set $K$}, and identify  the set of functions
$\mathcal H$ with the simple functions that can be { identified with }  finite sequences having values in $X$ {via the bijection $\chi_i\leftrightarrow e_i$, where $\chi_i$ is the characteristic function that takes the value $1$ on $i$ and $0$ otherwise, and $e_i$ is the canonical sequence with $1$ in the $i$-th coordinate and $0$ otherwise, $i\in \mathbb N$}. We consider also $\mathcal F$ to be the set of linear and continuous operators $u:X \to Y$.

For each simple function/finite sequence $f= \sum_{i=1}^n x_i \chi_{i} = \sum_{i=1}^n x_i e_{i} $,
we define
$$
S(u,f)(i):= \|u(f(i))\|, \quad i=1,\ldots,n,
$$
 and
$$
R(f,x')(i)=|\langle f(i),x' \rangle|^{1-\sigma} \|f(i)\|^\sigma, \quad i=1,\ldots,n.
$$

\begin{proposition}
{A linear operator $u:X \to Y$ is  $(q,p,\sigma)$-absolutely continuous if, and only if, it is $(\frac{q}{1-\sigma},\frac{p}{1-\sigma})$-$RS$ summing, for $R$ and $S$ given as above.}
\end{proposition}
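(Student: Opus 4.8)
The plan is to unwind both properties into inequalities involving only finite sequences in $X$ and to observe that they become the \emph{same} statement once the free scalar function $g$ is absorbed into the vectors. I would start by writing out the two maps explicitly. Fix $f=\sum_{i=1}^n x_i\chi_i\in\mathcal H$ and $g\in\mathcal L_{q'}(\nu)$, abbreviating $q'=\frac{q}{1-\sigma}$ and $p'=\frac{p}{1-\sigma}$. Since, for each fixed $w$, the symbol $g(w)f$ denotes the finitely supported function $f$ scaled by the scalar $g(w)$, linearity of $u$ together with the homogeneity axiom $|\alpha S(u,f)|=|S(u,\alpha f)|$ gives the diagonal identity $S(u,g(w)f)(w)=|g(w)|\,\|u(f(w))\|$; likewise the homogeneity of $R$ (which holds for this concrete choice because $|\alpha|^{1-\sigma}|\alpha|^{\sigma}=|\alpha|$) yields $R(g(w)f,x')(w)=|g(w)|\,|\langle f(w),x'\rangle|^{1-\sigma}\|f(w)\|^{\sigma}$. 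As $f$ is finitely supported, so is $g(w)f$, and every integral against the counting measure reduces to a finite sum.

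Substituting these expressions, the defining inequality for $(q',p')$-$RS$ summability becomes
\[
\Big(\sum_{i=1}^n |g(i)|^{q'}\|u(x_i)\|^{q'}\Big)^{1/q'}\le C\sup_{x'\in B_{X^*}}\Big(\sum_{i=1}^n |g(i)|^{p'}\big(|\langle x_i,x'\rangle|^{1-\sigma}\|x_i\|^{\sigma}\big)^{p'}\Big)^{1/p'},
\]
required to hold for every finite sequence $(x_i)$ and every $g\in\mathcal L_{q'}(\nu)$, while inequality \eqref{pq}, after raising both sides to the $q'$ and $p'$ powers that match $q'=\frac{q}{1-\sigma}$ and $p'=\frac{p}{1-\sigma}$, is exactly this displayed inequality in the special case where $g\equiv 1$ on the support of $f$.

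The equivalence then splits into two easy directions. For $RS\Rightarrow(q,p,\sigma)$-absolutely continuous I would specialize $g$ to the characteristic function $\chi_{\{1,\dots,n\}}$, which lies in $\mathcal L_{q'}(\nu)$ because it is finitely supported; the displayed inequality immediately collapses to \eqref{pq}. For the converse, given an arbitrary pair $(f,g)$ I would apply \eqref{pq} to the modified finite sequence $y_i:=g(i)x_i$, $i=1,\dots,n$. Using $\|u(y_i)\|=|g(i)|\,\|u(x_i)\|$ and $|\langle y_i,x'\rangle|^{1-\sigma}\|y_i\|^{\sigma}=|g(i)|\,|\langle x_i,x'\rangle|^{1-\sigma}\|x_i\|^{\sigma}$, the inequality for $(y_i)$ is precisely the displayed $RS$ inequality for $(f,g)$, with the same constant $C$.

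The only point demanding care, rather than a genuine obstacle, is the correct reading of the diagonal evaluations $S(u,g(w)f)(w)$ and $R(g(w)f,x')(w)$: one must check that the scalar $g(w)$ factors out through the homogeneity axioms and that $g(w)f$ still belongs to $\mathcal H$, which is exactly the closure property noted in the setup. Once the two maps are written out, the proposition reduces to recognizing two copies of one finite inequality; since the constants are preserved in both directions, in fact $\pi_{q,p}^{\sigma}(u)$ coincides with the $(q',p')$-$RS$ summing norm of $u$.
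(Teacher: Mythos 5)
Your proposal is correct and follows essentially the same route as the paper: both directions are obtained by specializing $g$ to $\chi_{\{1,\dots,n\}}$ for one implication and by applying inequality \eqref{pq} to the scaled vectors $g(i)x_i$ for the other. The explicit factoring of $|g(w)|$ through the homogeneity axioms is a harmless elaboration of what the paper does implicitly.
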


\begin{proof}
{
 Note that for $g\in \mathcal L_q(c)=\ell_q$,
$$
\int_\mathbb N |S(u,gf)|^{\frac{q}{1-\sigma}} \, d c  = \sum_{i=1}^n \|u(g(i)x_i)\|^{\frac{q}{1-\sigma}},
$$
and
$$
\sup_{x' \in B_{X^*}} \Big( \int_\mathbb N |R(gf,x')|^{\frac{p}{1-\sigma}} \, d c \Big)  = \sup_{x' \in B_{X^*}} \Big( \sum_{i=1}^n \big( | \langle g(i) x_i, x' \rangle|^{1-\sigma} \| g(i)x_i \big\|^\sigma \big)^{\frac{p}{1-\sigma}}
 \Big).
$$
If $u$ is $(q,p,\sigma)$-absolutely continuous then (\ref{pq}) occurs for all $x_1,\ldots,x_n\in X$. In particular for $ g(1)x_1,\ldots,g(n)x_n$, for all $x_1,\ldots,x_n\in X$ and all $g\in \ell_q$. Hence, $u$ is $(\frac{q}{1-\sigma},\frac{p}{1-\sigma})$-$RS$ summing.
Reciprocally, if $u:X \to Y$ is  $(\frac{q}{1-\sigma},\frac{p}{1-\sigma},\sigma)$-RS summing for the given definitions of $R$ and $S$ then, for arbitrary $x_1,\ldots,x_n\in X$ take $g=\chi_{\{1,\ldots,n\}}\in \mathcal L_q(c)$. It follows that $u$ is $(q,p,\sigma)$-absolutely continuous. }
\end{proof}

\begin{corollary} \label{inclu2}
Let $1 \le p_1 \le p_2 < \infty$, $ 1 \le q_1 \le q_2 < \infty$ and $0 \le \sigma < 1$.  Suppose also that
\[
\frac{1}{p_{1}}-\frac{1}{p_{2}}\leq\frac{1}{q_{1}}-\frac{1}{q_{2}}.
\]
Then
$$
\Pi_{q_1,p_1}^\sigma(X,Y) \subseteq \Pi_{q_2,p_2}^\sigma(X,Y).
$$
\end{corollary}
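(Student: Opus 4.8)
The plan is to deduce the corollary directly from Theorem \ref{inclu} by transporting everything through the identification furnished by the preceding Proposition. That Proposition shows that, for the concrete choices $\Omega=\mathbb N$ with the counting measure $c$, the compact set $K=B_{X^*}$, and the maps $S$ and $R$ defined just above it, a linear operator $u$ belongs to $\Pi_{q,p}^\sigma(X,Y)$ if and only if it belongs to $RS(\frac{q}{1-\sigma},\frac{p}{1-\sigma})$. Consequently the class equalities
$$
\Pi_{q_1,p_1}^\sigma(X,Y)=RS\Big(\tfrac{q_1}{1-\sigma},\tfrac{p_1}{1-\sigma}\Big),\qquad
\Pi_{q_2,p_2}^\sigma(X,Y)=RS\Big(\tfrac{q_2}{1-\sigma},\tfrac{p_2}{1-\sigma}\Big)
$$
hold, and the corollary reduces to the inclusion $RS(\frac{q_1}{1-\sigma},\frac{p_1}{1-\sigma})\subset RS(\frac{q_2}{1-\sigma},\frac{p_2}{1-\sigma})$, which I would obtain from Theorem \ref{inclu} applied to the rescaled exponents $P_i:=\frac{p_i}{1-\sigma}$ and $Q_i:=\frac{q_i}{1-\sigma}$.

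To invoke Theorem \ref{inclu} with these exponents I would first check its measure hypothesis: for the counting measure on $\mathbb N$ one has the classical inclusion $\ell_r\subset\ell_s$ with $\|\cdot\|_{L_s(c)}\le\|\cdot\|_{L_r(c)}$ whenever $1\le r<s<\infty$, so the constants $C_{s,r}$ exist (in fact one may take $C_{s,r}=1$); this is exactly the atomic, $\sigma$-finite setting anticipated in the remark preceding the theorem. Next I would verify admissibility of the exponents: since $\sigma\in[0,1)$ we have $1-\sigma>0$, so $p_1\le p_2$ and $q_1\le q_2$ yield $P_1\le P_2$ and $Q_1\le Q_2$, while $p_1,q_1\ge 1>1-\sigma$ forces $P_1,Q_1\ge 1$, as required.

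The only step that is a genuine computation rather than pure bookkeeping is checking that the gap condition of Theorem \ref{inclu} survives the rescaling. Writing $\frac{1}{P_i}=\frac{1-\sigma}{p_i}$ and $\frac{1}{Q_i}=\frac{1-\sigma}{q_i}$ one finds
$$
\frac{1}{P_1}-\frac{1}{P_2}=(1-\sigma)\Big(\frac{1}{p_1}-\frac{1}{p_2}\Big),\qquad
\frac{1}{Q_1}-\frac{1}{Q_2}=(1-\sigma)\Big(\frac{1}{q_1}-\frac{1}{q_2}\Big),
$$
so dividing by the positive factor $1-\sigma$ shows that the hypothesis $\frac{1}{p_1}-\frac{1}{p_2}\le\frac{1}{q_1}-\frac{1}{q_2}$ is equivalent to $\frac{1}{P_1}-\frac{1}{P_2}\le\frac{1}{Q_1}-\frac{1}{Q_2}$. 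With every hypothesis verified, Theorem \ref{inclu} gives $RS(Q_1,P_1)\subset RS(Q_2,P_2)$, and translating back through the Proposition yields $\Pi_{q_1,p_1}^\sigma(X,Y)\subseteq\Pi_{q_2,p_2}^\sigma(X,Y)$. I expect the main (and essentially the only) obstacle to be this bookkeeping, namely confirming that rescaling all four exponents by the common factor $1-\sigma$ preserves both their ordering and the gap condition, which the identities above settle.
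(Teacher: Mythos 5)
Your proposal is correct and follows essentially the same route as the paper: identify $\Pi_{q_i,p_i}^\sigma(X,Y)$ with $RS(\frac{q_i}{1-\sigma},\frac{p_i}{1-\sigma})$ via the preceding Proposition and then apply Theorem \ref{inclu} after checking that rescaling by $1-\sigma$ preserves the ordering and the gap condition. If anything, you are slightly more careful than the paper, which does not explicitly verify the measure hypothesis $\|\cdot\|_{L_s(c)}\le C_{s,r}\|\cdot\|_{L_r(c)}$ for the counting measure inside the corollary's proof.
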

\begin{proof}
This is a direct consequence of Proposition \ref{inclu}. Indeed, take $R$ and $S$ as said above. Then the elements of
{$\Pi_{q_i,p_i}^\sigma(X,Y)$
coincide with the ones of $RS(\frac{q_i}{1-\sigma}, \frac{p_i}{1-\sigma})$, $ i=1,2$,} due to the definitions of $R$ and $S$. Clearly, $\frac{q_1}{1-\sigma} \le \frac{q_2}{1-\sigma}$
and $\frac{p_1}{1-\sigma} \le \frac{p_2}{1-\sigma}$ if and only if $q_1 \le q_2$ and $p_1 \le p_2$, and
$$
\frac{1-\sigma}{p_1} - \frac{1-\sigma}{p_2}   \le \frac{1-\sigma}{q_1} - \frac{1-\sigma}{q_2}
$$
if and only if $\frac{1}{p_{1}}-\frac{1}{p_{2}}\leq\frac{1}{q_{1}}-\frac{1}{q_{2}}.$ This gives the result.
\end{proof}

\begin{remark}
The same definition and extensions can be done for two different interpolation parameters $\sigma_1$ and $\sigma_2$ in the left and right hand sides of the inequalities for getting a more general definition of
absolutely continuous operators depending on four parameters. These classes have not being studied yet, but the expected results would be similar to the ones obtained in this section.
\end{remark}

\section{Absolutely continuous operators and  integrability of strongly measurable functions}

First, it is important to remark that given a continuous linear operator $u:X\to Y$, the map
 $\tilde u:{\mathcal P}_1(\mu,X) \to {\mathcal P}_1(\mu,Y) $ given by $\tilde u(f)=u\circ f$,
 is well-defined and continuous, according to the proof of the theorem in \cite{Di}. In all this section  $\mu$ will be a \textit{finite} measure. The following result establishes the link
among summability of the operators and integrability of the corresponding vector valued functions.

\vspace{0.5cm}

\textbf{Theorem} (\textit{Theorem 5 in \cite{perusan}})
\textit{Let $0 \le \sigma \le 1$ and let $\mu$ be a finite measure.
An operator  $u:X \to Y$ is $(1,\sigma)$-absolutely continuous if and only if the composition operator $\tilde{u}:{\mathcal P}_{1/(1-\sigma)}^\sigma(\mu,X)\to {\mathcal B}_{1/(1-\sigma)}(\mu,Y)$ given by $\tilde u(f):=u\circ f$ is well defined and continuous. In this case, }
$$
\pi^\sigma_{1} (u) = \| \tilde{u} \|.
$$

Together with the inclusion result shown in the previous section, this provides useful information about the relation among integrability of vector valued functions and summability properties of the operators.
Note that $\Pi^\sigma_{1}(X,Y)= \Pi^\sigma_{1,1}(X,Y)$.


\begin{corollary} \label{corun}
Let $0 \le \sigma \le 1$ and let $\mu$ be a non-atomic finite measure.
Let  $u:X \to Y$ be an operator and consider the composition operator
$$
\tilde{u}:{\mathcal P}_{1/(1-\sigma)}^\sigma (\mu,X)\to {\mathcal B}_{1/(1-\sigma)}(\mu,Y)
$$
given by $\tilde u(f):=u\circ f$.

\begin{itemize}
\item[(i)] If $\tilde u$ is continuous,
then $u \in  \Pi_{q,p}^\sigma(X,Y) $ for all  $1 \le p \le q < \infty$.

\item[(ii)] {If $\Pi_{\frac{1}{1-\sigma},1}(X,Y)=\Pi_1^\sigma (X,Y)$ then, $ \Pi_{\frac{1}{1-\sigma},1}(X,Y)\subset \Pi_{q,p}^\sigma(X,Y) $ for all  $1 \le p \le q < \infty$. In particular,  $\Pi_{\frac{1}{1-\sigma},1}(C(K),Y) \subset \Pi_{q,p}^\sigma(C(K),Y) $ for all  $1 \le p \le q < \infty.$}


\item[(iii)]  If $1\leq s \leq {1}/{(1-\sigma)}<\infty$ and ${\mathcal L}(X,Y)=\Pi_{s}(X,Y),$ then $\tilde{u}$ is continuous and ${\mathcal L}(X,Y)=\Pi_{p,q}^\sigma(X,Y)$ for $1 \le p \le q < \infty$.
For instance, if $X$ is an $L^\infty$-space or an $L^1$-space and $Y$ is a Hilbert space, we get the result for $\sigma \ge 1/2$ and $s=2$.
\end{itemize}
\end{corollary}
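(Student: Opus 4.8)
The plan is to funnel all three parts through one elementary inclusion together with Theorem~5 of \cite{perusan}. The inclusion is
$$\Pi_1^\sigma(X,Y)=\Pi_{1,1}^\sigma(X,Y)\subseteq\Pi_{q,p}^\sigma(X,Y)\qquad (1\le p\le q<\infty),$$
which I would obtain by applying Corollary~\ref{inclu2} with $(q_1,p_1)=(1,1)$ and $(q_2,p_2)=(q,p)$: the size conditions $1\le p$, $1\le q$ are automatic, and the index condition $\frac{1}{p_1}-\frac{1}{p_2}\le\frac{1}{q_1}-\frac{1}{q_2}$ reduces to $1-\frac1p\le 1-\frac1q$, i.e. $p\le q$, which is the standing hypothesis. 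This single inclusion is the engine driving the corollary.

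For (i) I would note that, by the quoted Theorem~5 of \cite{perusan}, continuity of $\tilde u$ is exactly $(1,\sigma)$-absolute continuity of $u$, i.e. $u\in\Pi_1^\sigma(X,Y)$; the displayed inclusion then puts $u$ in every $\Pi_{q,p}^\sigma(X,Y)$. For (ii), under the hypothesis $\Pi_{\frac{1}{1-\sigma},1}(X,Y)=\Pi_1^\sigma(X,Y)$ the chain $\Pi_{\frac{1}{1-\sigma},1}=\Pi_1^\sigma\subseteq\Pi_{q,p}^\sigma$ is immediate from the same inclusion; the $C(K)$ statement follows by invoking the known coincidence $\Pi_{\frac{1}{1-\sigma},1}(C(K),Y)=\Pi_1^\sigma(C(K),Y)$ over $C(K)$-spaces, which makes the hypothesis hold automatically there.

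For (iii) I would first establish the auxiliary inclusion $\Pi_{1/(1-\sigma)}(X,Y)\subseteq\Pi_1^\sigma(X,Y)$ by a term-by-term comparison of the two defining inequalities: for $x'\in B_{X^*}$ one has $|\langle x_i,x'\rangle|\le\|x_i\|$, whence $|\langle x_i,x'\rangle|^{1/(1-\sigma)}\le|\langle x_i,x'\rangle|\,\|x_i\|^{\sigma/(1-\sigma)}=\big(|\langle x_i,x'\rangle|^{1-\sigma}\|x_i\|^\sigma\big)^{1/(1-\sigma)}$, so the right-hand side of the $1/(1-\sigma)$-summing inequality is dominated by that of the $(1,\sigma)$-absolutely continuous one. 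Since $s\le 1/(1-\sigma)$ gives $\Pi_s\subseteq\Pi_{1/(1-\sigma)}$ by monotonicity of the summing ideals, the assumption $\mathcal L(X,Y)=\Pi_s(X,Y)$ yields $\mathcal L(X,Y)=\Pi_s\subseteq\Pi_{1/(1-\sigma)}\subseteq\Pi_1^\sigma\subseteq\mathcal L(X,Y)$, forcing $\mathcal L(X,Y)=\Pi_1^\sigma(X,Y)$. Then every $u$ is $(1,\sigma)$-absolutely continuous, so Theorem~5 of \cite{perusan} makes $\tilde u$ continuous, and combining $\mathcal L=\Pi_1^\sigma$ with the engine inclusion and the trivial $\Pi_{q,p}^\sigma\subseteq\mathcal L$ gives $\mathcal L(X,Y)=\Pi_{q,p}^\sigma(X,Y)$ for all $1\le p\le q<\infty$. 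For the concrete example I would invoke Grothendieck's theorem (see \cite{DJT95}): operators from an $L^\infty$- or $L^1$-space into a Hilbert space are $2$-summing, so $\mathcal L(X,Y)=\Pi_2(X,Y)$ and one may take $s=2$; the requirement $s\le 1/(1-\sigma)$ then reads $\sigma\ge 1/2$.

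The only step that is not pure bookkeeping is the $C(K)$ coincidence used in (ii), which is external input from the theory of absolutely continuous operators rather than a consequence of the tools built here; everything else reduces to index arithmetic and the two ready-made results, Corollary~\ref{inclu2} and Theorem~5 of \cite{perusan}.
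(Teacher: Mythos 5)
Your proposal is correct, and for parts (i) and (ii) it coincides with the paper's argument: the paper proves (i) by exactly your ``engine'' inclusion $\Pi_1^\sigma=\Pi_{1,1}^\sigma\subseteq\Pi_{q,p}^\sigma$ (Corollary \ref{inclu2} with $p_1=q_1=1$) combined with Theorem 5 of \cite{perusan}, and for (ii) it offers the same direct chain as its first option (mentioning an alternative via Corollary 9 of \cite{perusan}) and, like you, outsources the $C(K)$ coincidence to external results (\cite{LS97}). Where you genuinely diverge is (iii): the paper simply cites Corollary 10 of \cite{perusan} to get that $\tilde u$ is well defined and continuous for every $u\in\mathcal L(X,Y)=\Pi_s(X,Y)$, and then applies Corollary \ref{inclu2}; you instead derive $\mathcal L(X,Y)=\Pi_1^\sigma(X,Y)$ from scratch via $\Pi_s\subseteq\Pi_{1/(1-\sigma)}\subseteq\Pi_1^\sigma$ --- the first inclusion being the classical monotonicity of summing ideals and the second your pointwise estimate $|\langle x_i,x'\rangle|^{1/(1-\sigma)}\le\bigl(|\langle x_i,x'\rangle|^{1-\sigma}\|x_i\|^\sigma\bigr)^{1/(1-\sigma)}$, which is valid since $|\langle x_i,x'\rangle|\le\|x_i\|$ for $x'\in B_{X^*}$ --- and only then invoke Theorem 5 to recover continuity of $\tilde u$. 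Your route buys self-containedness (it needs only Theorem 5 from \cite{perusan}, not Corollary 10, and makes the mechanism behind the coincidence of ideals explicit), at the cost of re-proving an inclusion the cited corollary already encapsulates; the paper's route is shorter but leans more heavily on \cite{perusan}. Your appeal to Grothendieck's theorem for the final example matches the paper's appeal to the little Grothendieck theorem and gives the same conclusion $\mathcal L(X,Y)=\Pi_2(X,Y)$.
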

\begin{proof}
For the proof of (i), just take into {account  that for $p_1=1=q_1$, $q=q_2$ and $p=p_2$, we have that
$$
1- \frac{1}{p} \le 1- \frac{1}{q},
$$
whenever $p \le q$.   Corollary \ref{inclu} gives that $ \Pi_1^\sigma(X,Y)\subset \Pi_{p,q}^\sigma(X,Y)$.} Now a direct application of the Theorem (\textit{Theorem 5 in \cite{perusan}}) gives the result.

(ii)
We can prove this directly as a consequence of the inclusion result given by Corollary \ref{inclu2}. Alternatively,
by Corollary 9 in \cite{perusan},{ for
$1\leq s=1/(1-\sigma)<\infty$ and $\sigma=1/s'$,} we have that if $\Pi_{s,1}(X,Y)=\Pi_1^\sigma (X,Y),$
then $u\in \Pi_{s,1}(X,Y)$ if and only if  $\tilde u:{\mathcal P}_s^\sigma(\mu,X)\to {\mathcal B}_s(\mu,Y)$ is well-defined and continuous.
Consequently, by (i)  we directly obtain  that  $u \in \Pi_{q,p}^\sigma(X,Y)$.

The case $X=C(K)$ is given by an application of the results of \cite{LS97}.

(iii)
Suppose that
 $1\leq s\leq \frac{1}{1-\sigma}<\infty$. Assume that ${\mathcal L}(X,Y)=\Pi_{s}(X,Y).$ Then by Corollary  10 in \cite{perusan}
$\tilde u:{\mathcal S}_\frac{1}{1-\sigma}^\sigma(\mu,X)\to {\mathcal B}_\frac{1}{1-\sigma}(\mu,Y)$
 is well-defined and continuous for any $u\in {\mathcal L}(X,Y)$. The inclusion given by Corollary \ref{inclu2} provides the second part of (iii).
The last result is a consequence of the sometimes called ``little Grothendieck theorem", see \S 11.11 in \cite{DF92}.
\end{proof}

\vspace{1cm}

Let us finish with a concrete application for $L^1$-spaces. It can be proved as a direct consequence of Corollary 15 in \cite{perusan} and Corollary \ref{corun}(i).
Recall first that  we say that a Banach space $E$ is $(\sigma,p)$-Hilbertian ---see the definition in   \cite{Matt87}---,
if there is an interpolation pair $(H,F)$, where $H$ is a Hilbert space and $F$ is a Banach space, in such a way that  $E$ coincides isomorphically with
 the real interpolation space $(H,F)_{\sigma,p}$, $1 \le p < \infty$ and $0 \le \sigma < 1$.
In the same way, it is said that $E$ is
$\sigma$-Hilbertian   if it is isomorphic to a complex interpolation space $E=[H,F]_\sigma$.

Consider  an ${L}^1$-space $L^1$ and a non-atomic finite positive measure $\mu$.
Let $0 \le \sigma <1$ and consider an operator $u:L^1 \to E$. Then for all $1 \le p \le q < \infty,$

\begin{itemize}
\item[(1)] if $E$ is a quotient of an $L^\infty$-space having cotype smaller that $\frac{2}{1-\sigma},$ then $u \in \Pi_{q,p}^\sigma(L^1,E).$

\item[(2)] If $E= L^r$ for $2 \le r < \frac{2}{1-\sigma}, $ then $u \in \Pi_{q,p}^\sigma(L^1,E).$

\item[(3)] If $E$ is a $(\sigma,2)$-Hilbertian space, then $u \in \Pi_{q,p}^\sigma(L^1,E).$

\item[(4)] If $E$ is a $\sigma$-Hilbertian space, then $u \in \Pi_{q,p}^\sigma(L^1,E).$

\item[(5)] If $E$ is a Lorentz space $L_{r,s}$ for $\frac{2}{1+\sigma} < r$ and $s < \frac{2}{1-\sigma}$, then $u \in \Pi_{q,p}^\sigma(L^1,E).$

\end{itemize}

\end{document}